\newtheorem{theorem}{Theorem}[section] 
\newtheorem{lemma}[theorem]{Lemma}
\newtheorem{proposition}[theorem]{Proposition}
\theoremstyle{definition}
\newtheorem{example}[theorem]{Example}
\numberwithin{equation}{section}
\begin{document}


\newcommand{\abs}[1]{\lvert#1\rvert}
\newcommand{\NU}[1]{\text{\rm V}(#1)}
\newcommand{\lara}[1]{\langle{#1}\rangle}
\newcommand{\ZZ}{\mathbb{Z}}
\newcommand{\QQ}{\mathbb{Q}}
\newcommand{\RR}{\mathbb{R}}
\newcommand{\sRR}{{\scriptstyle\mathbb{R}}}
\newcommand{\NN}{\mbox{$\mathbb{N}$}}
\newcommand{\CC}{\mbox{$\mathbb{C}$}}
\newcommand{\paug}[2]{\varepsilon_{#1}(#2)}
\newcommand{\cclr}[1]{\text{\rm ccl}_{r}(#1)}


\title[Finite groups of units in 
\protect{\mbox{$\ZZ[\text{\rm PSL}(2,\lowercase{q})]$}}]
{Finite groups of units and their composition factors in the integral
group rings of the groups $\boldsymbol{\text{\rm\bf PSL}(2,q)}$}

\author{Martin Hertweck}
\address{Universit\"at Stuttgart, Fachbereich Mathematik, IGT,
Pfaffenwald\-ring 57, 70550 Stuttgart, Germany}
\email{hertweck@mathematik.uni-stuttgart.de}

\author{Christian R. H\"ofert}
\address{Universit\"at Stuttgart, Fachbereich Mathematik, IGT,
Pfaffenwald\-ring 57, 70550 Stuttgart, Germany}
\email{hoefert@mathematik.uni-stuttgart.de}

\author{Wolfgang Kimmerle}
\address{Universit\"at Stuttgart, Fachbereich Mathematik, IGT,
Pfaffenwald\-ring 57, 70550 Stuttgart, Germany}
\email{kimmerle@mathematik.uni-stuttgart.de}

\subjclass[2000]{Primary 16S34, 16U60; Secondary 20C05}
\keywords{integral group ring, Zassenhaus conjecture, torsion unit}
\date{\today}


\begin{abstract}
Let $G$ denote the projective special linear group $\text{\rm PSL}(2,q)$, 
for a prime power $q$. It is shown that a  finite $2$-subgroup of the
group $\NU{\ZZ G}$ of augmentation $1$ units in the integral group ring
$\ZZ G$ of $G$ is isomorphic to a subgroup of $G$. Furthermore, it is 
shown that a composition factor of a finite subgroup of $\NU{\ZZ G}$ is 
isomorphic to a subgroup of $G$.
\end{abstract}

\maketitle

\section{Introduction}\label{Sec:intro}

A conjecture of H.~Zassenhaus from the 1970s asserts that
for a finite group $G$, every torsion unit in its integral group ring
$\ZZ G$ is conjugate to an element of $\pm G$ by a unit in the 
rational group ring $\QQ G$. For known results on this still unsolved
conjecture the reader is referred to
\cite[Chapter~5]{Seh:93}, \cite[\S\,8]{Seh:03} and \cite{Her:05,Her:06}.
In fact this conjugacy question makes sense even for finite groups of
units in $\ZZ G$.
The outstanding result in the field is Weiss's proof \cite{Wei:91}
that for nilpotent $G$, this strong version of the conjecture is true.

The question begs to be asked though, is a finite group of units in 
$\ZZ G$ (for general $G$) necessarily isomorphic to a subgroup of $\pm G$?
Related issues are addressed in Problems No.~19 and 20 from \cite{OWR:07}.
We present results from the second 
authors Ph.D.~thesis \cite{Hoef:08} when $G$ is a two-dimensional 
projective special linear group $\text{\rm PSL}(2,q)$, $q$ a prime power. 
We remark that no feasible approach is currently available for obtaining
some general results which does not boil down to conjugacy questions.

It is always enough to consider only finite subgroups of $\NU{\ZZ G}$,
the group of augmentation $1$ units in $\ZZ G$. 
Focus may be either on particular classes of groups $G$, or on 
particular classes (even certain isomorphism types) of finite subgroups of
$\NU{\ZZ G}$ (for general $G$), or on both.
To indicate the difficulty of the terrain: a torsion
unit in $\NU{\ZZ G}$ is not known to have the same order as some element
of $G$, except when it is of prime power order \cite{CoLi:65} or $G$ 
is solvable \cite{Her:07b}. The first result, already known for a long
time, shows that the exponent of a finite subgroup of $\NU{\ZZ G}$ 
divides the exponent of $G$. Another general result is that 
the order of a finite subgroup of $\NU{\ZZ G}$ divides the order of $G$
(Berman; cf.\ \cite{San:81}, for proof see \cite[Lemma~(37.3)]{Seh:93}).
More recently, it has been shown \cite{Her:07a} that a finite subgroup 
of $\NU{\ZZ G}$ has cyclic Sylow $p$-subgroups provided this is true 
for $G$, as yet another application of partial augmentations. 
This happened after it was observed \cite{Ki:07}
that $\NU{\ZZ G}$ 
contains no four-group provided there is no such in $G$, by the 
Berman--Higman result on the vanishing of $1$-coefficients of torsion
units and by a deep theorem of Brauer and Suzuki on groups with
quaternion Sylow $2$-subgroups.

The Zassenhaus conjecture for torsion units in 
$\ZZ[\text{\rm PSL}(2,q)]$ (brackets are inserted for better readability)
has been studied in \cite[\S\,6]{Her:05b} using a modular version of the
Luthar--Passi method.

In \S\,\ref{sec:2groups}, 
it is shown that finite $2$-subgroups of $\NU{\ZZ[\text{\rm PSL}(2,q)]}$
are isomorphic to subgroups of $\text{\rm PSL}(2,q)$ (only the case $q$
odd matters, when Sylow $2$-subgroups of $\text{\rm PSL}(2,q)$ are dihedral).
The method of proof is that of \cite{Her:07a}, 
and we sketch the relevant idea. 
Let the finite group $H$ be a (putative) subgroup of $\NU{\ZZ G}$.
Let $\xi$ be an ordinary character of the group $G$. When $\xi$ is
viewed as a trace function on the complex group ring $\CC G$ of $G$,
then its restriction $\xi_{H}$ to $H$ is a character of $H$. Suppose 
that the values $\xi(x)$, $x\in H$, are sufficiently well known, which
essentially means that some knowledge is available about the Zassenhaus 
conjecture for torsion units in $\NU{\ZZ G}$ of the same order as some
element of $H$. Then, for an irreducible character $\lambda$ of $H$,
the scalar product $\lara{\lambda,\xi_{H}}$, defined as 
$\frac{1}{\abs{H}}\sum_{x\in H}\lambda(x)\xi_{H}(x^{-1})$, might be
calculated accurately enough to show that it is not a
nonnegative rational integer, a contradiction showing that $H$ is not
a subgroup of $\NU{\ZZ G}$ since $\lara{\lambda,\xi_{H}}$ is the number
of times $\lambda$ occurs in $\xi_{H}$. This interpretation of
$\lara{\lambda,\xi_{H}}$ suggests that it might also be useful 
to know over which fields a representation corresponding to $\xi$
can be realized (keep $\RR$, the real numbers, as a first choice in mind).

Let $H$ be a finite subgroup of $\NU{\ZZ G}$.
When $G$ is solvable, then so is $H$ (see \cite[Lemma~(7.4)]{Seh:93}).
When $G$ is nonsolvable, one might ask whether the (nonabelian) 
composition factors of $H$ are isomorphic to subquotients of $G$
(Problem~20 from \cite{OWR:07}). This holds if $H$ is a group basis 
of $\ZZ G$ (meaning that $\abs{H}=\abs{G}$), when it even has the same
chief factors as $G$ including multiplicities \cite{KiLySaTe:90}.

In \S\,\ref{sec:CF}, it is shown that for a finite subgroup of 
$\NU{\ZZ[\text{\rm PSL}(2,q)]}$, its composition factors 
are isomorphic to subgroups of $\text{\rm PSL}(2,q)$. 
This was shown in \cite{Hoef:08} under the additional assumption that
$\text{\rm PSL}(2,q)$ has elementary abelian Sylow $2$-subgroups.
Our approach differs slightly in that \cite[Lemma~3.7]{Hoef:08}
is replaced by the obvious generalization, Lemma~\ref{lem2} below,
and it is subsequently used that a finite subgroup of 
$\NU{\ZZ[\text{\rm PSL}(2,q)]}$ has abelian or dihedral Sylow 
$2$-subgroups (and the part of the classification of the finite simple
groups concerning such groups).
We remark that Dickson has given a complete list of the subgroups
of $\text{\rm SL}(2,q)$, see Chapter~3, \S\,6 in \cite{Suz:82}, for 
example. The nonabelian simple subgroups of $\text{\rm PSL}(2,q)$ are
isomorphic to $\text{\rm PSL}(2,p^{m})$, with the field of $p^{m}$ 
elements a subfield of the field of $q$ elements and characteristic $p$,
or isomorphic to the alternating group
$A_{5}$ if $5$ divides the group order.

\section{Finite $2$-groups of units in $\ZZ[\text{\rm PSL}(2,q)]$}%
\label{sec:2groups}

Let $G=\text{\rm PSL}(2,q)$, where $q$ is a prime power. 
We will show that finite $2$-subgroups of $\NU{\ZZ G}$ are
isomorphic to subgroups of $G$. The Sylow $2$-subgroups of $G$ are
elementary abelian if $q$ is even and dihedral groups if $q$ is odd
(see \cite[2.8.3]{Gor:68}). 
Remember that the order of a finite subgroup of $\NU{\ZZ G}$ divides
the order of $G$.

When $q$ is even, a Sylow $2$-subgroup of $G$ has exponent $2$ and so has 
any finite $2$-subgroup of $\NU{\ZZ G}$ by \cite[Corollary~4.1]{CoLi:65}.
Then it follows that a finite $2$-subgroup of $\NU{\ZZ G}$ is elementary
abelian, and therefore isomorphic to a subgroup of $G$.

So we only have to deal with the case $q$ odd. 
Note that a dihedral $2$-group contains, for each divisor $n$ of its
order, $n\geq 4$, a cyclic and a dihedral subgroup of order $n$.
Thus it suffices to prove the following theorem.

\begin{theorem}\label{t2UG}
Let $H$ be a finite $2$-subgroup of $\NU{\ZZ G}$, where
$G=\text{\rm PSL}(2,q)$ with $q$ an odd prime power.
Then $H$ is either cyclic or a dihedral group.
\end{theorem}
\begin{proof}
Let $\varepsilon\in\{-1,1\}$ such that $q\equiv\varepsilon\mod{4}$.
For convenience of the reader, the (ordinary) character table of 
$G$ is shown in Table~\ref{Table3}
(in the notation from \cite[\S\,38]{Dor:71}).

\newlength{\uni}
\settowidth{\uni}{$- $}
\newlength{\unii}
\settowidth{\unii}{$\varepsilon=-1:\; {}$}
\begin{table}[h] 
\[ \begin{array}{c}
\begin{array}{c|cccccc} \hline
\text{class of} & 1 & c & d & a^{l} & b^{m} \rule[-7pt]{0pt}{20pt} \\ \hline
\text{order} 
& 1 & p & p & 
\text{\rm of $a$ is }
\frac{q-1}{2} & \text{\rm of $b$ is }\frac{q+1}{2}
\rule[-7pt]{0pt}{20pt} \\ \hline
1 & 1 & 1 & 1 & 1 & 1 \rule[0pt]{0pt}{13pt} \\
\psi & q & 0 & 0 & 1 & -1\hspace*{\uni} \\
\chi_{i} & q+1 & 1 & 1 & \rho^{il}+ \rho^{-il} & 0 \\
\theta_{j} & q-1 & -1\hspace*{\uni} & -1\hspace*{\uni} 
& 0 & -(\sigma^{jm}+ \sigma^{-jm}) \\
\eta_{1} & \frac{1}{2}(q+\varepsilon) & 
\frac{1}{2}(\varepsilon+\sqrt{\varepsilon q})
 & \frac{1}{2}(\varepsilon-\sqrt{\varepsilon q})
 & (-1)^{l}\delta_{\varepsilon,1} & (-1)^{m+1}\delta_{\varepsilon,-1} 
\rule[-7pt]{0pt}{20pt} \\
\eta_{2} & \frac{1}{2}(q+\varepsilon) & 
\frac{1}{2}(\varepsilon-\sqrt{\varepsilon q})
 & \frac{1}{2}(\varepsilon+\sqrt{\varepsilon q})
& (-1)^{l}\delta_{\varepsilon,1} & (-1)^{m+1}\delta_{\varepsilon,-1} 
\rule[-7pt]{0pt}{5pt} \\ \hline
\end{array} \\
\begin{array}{rl}
\text{Entries:}\ 
& \delta_{\varepsilon,\pm 1} \text{ Kronecker symbol,} \\ 
& \rho=e^{4\pi i/(q-1)},\;\sigma=e^{4\pi i/(q+1)}, \\
& \parbox[t]{\unii}{$\varepsilon=1:\; $ \hfill} 
1\leq i\leq \frac{1}{4}(q-5),\; 1\leq j,l,m\leq \frac{1}{4}(q-1),
\rule[6pt]{0pt}{5pt} \\
& \varepsilon=-1:\;
1\leq i,j,l\leq \frac{1}{4}(q-3),\; 1\leq m\leq \frac{1}{4}(q+1). 
\rule[6pt]{0pt}{5pt} \\
\end{array}
\rule[0pt]{0pt}{50pt}
\end{array} \] 
\rule[0pt]{0pt}{5pt}
\caption{Character table of $\text{\rm PSL}(2,q)$, 
$q=p^{f}\geq 5$, odd prime $p$}
\label{Table3}
\vspace*{-10pt}
\end{table}

We can assume that $\abs{H}\geq 8$. Then $8$ divides $\abs{G}$.
Since $\abs{G}=(q-1)q(q+1)/2$, the maximal power of $2$ dividing 
$\abs{G}$ divides $q-\varepsilon$. Hence $q-\varepsilon\equiv 0\mod{8}$.

We can assume that $q\neq 5$, since otherwise $G$ is isomorphic to
the alternating group $A_{5}$, and the statement of the theorem is
known (see \cite{DoJuMi:97}). Then we can set $\xi=\chi_{1}$ if 
$\varepsilon=1$ and $\xi=\theta_{1}$ if $\varepsilon=-1$. Note that 
$\xi(1)=q+\varepsilon$. The group $G$ has only one conjugacy class of
involutions. Let $s$ be an involution in $G$.
By \cite[Corollary~3.5]{Her:06}, it follows that an 
involution in $H$ is conjugate to $s$ by a unit
in $\QQ G$. So $\xi(x)=\xi(s)$ for an involution $x$ of $H$.
Note that $\xi(s)=-2\varepsilon$. Suppose that $H$ has an
element $u$ of order $4$. For an element $g$ of $G$ of order $4$ we have
$\xi(g)=0$. By \cite[Proposition~3.1]{Her:06}, it follows that
$\xi(u)=\paug{s}{u}\xi(s)$, where $\paug{s}{u}$ denotes the partial 
augmentation of $u$ at the conjugacy class of $s$. 
By \cite[Theorem~4.1]{CoLi:65}, $\paug{s}{u}$ is divisible by $2$ 
and so $\xi(u)\equiv 0\mod{4}$. Also note that $\xi(u)=\xi(u^{-1})$
since $\xi(u)$ is a rational integer. So 
$\xi(u)+\xi(u^{-1})\equiv 0\mod{8}$.

Suppose that $H$ is elementary abelian of order $8$. Let $\lambda$ be
an irreducible character of $H$ which is not principal. Then 
\[ \sum_{x\in H}\lambda(x)\xi(x^{-1})=\xi(1)+\xi(s)
\sum_{1\neq x\in H}\lambda(x)=(q+\varepsilon)-2\varepsilon(3-4)
=q+3\varepsilon. \]
Since $q+3\varepsilon\not\equiv 0\mod{8}$, this contradicts the fact
that $\lara{\lambda,\xi_{H}}$ is a (nonnegative) integer.

Now suppose that $H$ is the direct product of a cyclic group of order $4$
and a cyclic group of order $2$. Let $u,v\in H$ such that 
$\lara{u}$ and $\lara{v}$ are the two subgroups of order $4$ in $H$.
Let $\lambda$ be the principal character of $H$. Then
\[ \sum_{x\in H}\lambda(x)\xi(x^{-1})=(q+\varepsilon)+
3(-2\varepsilon)+2\xi(u)+2\xi(v)\equiv q+3\varepsilon\mod{8}, \]
again a contradiction.

We have shown that a maximal abelian subgroup of $H$ is either cyclic
or isomorphic to a four-group $V$ (the direct product of two cyclic 
groups of order $2$). Suppose that $H$ has a 
noncyclic abelian normal subgroup $N$. Then $N\cong V$, and $N$ is a
maximal abelian normal subgroup of $H$, so that the quotient $H/N$ acts
faithfully on $N$. It follows that $H$ is either $N$ or a dihedral
group of order $8$. Thus we can assume that $H$ has no 
noncyclic abelian normal subgroups. When $H$ is not cyclic or a
dihedral group, it then must be a semidihedral group or a 
(generalized) quaternion 
group (see \cite[5.4.10]{Gor:68}). A semidihedral group contains a
direct product of a cyclic group of order $4$ and a cyclic group of 
order $2$, so we have already ruled out the possibility of $H$ being
semidihedral. A (generalized) quaternion group has a quaternion group
of order $8$ as a subgroup. Hence the proof will be finished once
we have shown that $H$ cannot be the quaternion group of order $8$.

Assume the contrary. 
Let $\lambda$ be the irreducible character of $H$ of degree $2$.
Then $\lambda(z)=-2$ for the involution $z$ in $H$ and 
$\lambda(u)=0$ for an element $u$ of order $4$ in $H$.
It follows that
\[ \sum_{x\in H}\lambda(x)\xi(x^{-1})=2(q+\varepsilon)
+(-2)(-2\varepsilon)=2(q-\varepsilon)+8\varepsilon, \]
so $\lara{\lambda,\xi_{H}}$ is an odd integer.
This reminds us of the following fact. If $W$ is an irreducible 
$\RR H$-module such that $\CC\otimes_{\sRR}W$ has character
$\mu$ satisfying $\lara{\lambda,\mu}\neq 0$, then $W$ is the quaternion
algebra on which $H$ acts by multiplication, and so 
$\lara{\lambda,\mu}=2$. But a $\CC G$-module $M$ with character $\xi$ can
be realized over $\RR$, which means that there exists an isomorphism
$M\cong\CC\otimes_{\sRR}M_{0}$ for some $\RR G$-module $M_{0}$.
This can be shown by calculating the Frobenius--Schur indicator of $M$ 
in terms of the character $\xi$ (see \cite[XI.8.3]{HuBl:82}). This 
implies that $\lara{\lambda,\xi_{H}}$ is an even integer, a contradiction.
\end{proof}

In view of the final contradiction in the above proof, we remark that
the Schur indices, over the rational field, of the irreducible 
characters of $\text{SL}(2,q)$ and the simple direct summands of the
rational group algebra $\QQ[\text{SL}(2,q)]$ are known 
\cite{Jan:74}, \cite{Sha:83}. In particular, the Schur indices of the 
irreducible characters of $\text{PSL}(2,q)$ are all $1$.

We give an instance where the theorem can be applied.
The group $\text{\rm PSL}(2,7)$, of order $168$, is the second smallest
nonabelian simple group. In \cite[Example~3.6]{Her:06}, it has been
shown that for $G=\text{\rm PSL}(2,7)$, the (first) Zassenhaus conjecture 
is valid, that is, any torsion unit in $\NU{\ZZ G}$ is conjugate to an
element of $G$ by a unit in $\QQ G$.

\begin{example}
Let $G=\text{\rm PSL}(2,7)$. We show that a finite subgroup $H$ of
$\NU{\ZZ G}$ is conjugate to a subgroup of $G$ by a unit in $\QQ G$.
By \cite[Theorem~1]{BlHiKi:95}, any subgroup of $\NU{\ZZ G}$ of the 
same order as $G$ is conjugate to $G$ by a unit in $\QQ G$. 
Hence we can assume that $\abs{H}<\abs{G}$. We have $\abs{G}=2^{3}.3.7$. 
The conjugacy classes of $G$ consist of one class each of elements of 
orders $1$, $2$, $4$ and $3$, and two classes of elements of order $7$,
with an element of order $7$ not being conjugate to its inverse.
The Zassenhaus conjecture is valid for $G$, so in particular a torsion
unit in $\NU{\ZZ G}$ has the same order as some element of $G$.

We first show that $H$
is isomorphic to a subgroup of $G$. Recall that $\abs{H}$ divides
$\abs{G}$. So $H$ is solvable as $60$ is not a divisor of $\abs{G}$.
Let $M$ be a minimal normal subgroup of $H$. Then $H$ is an elementary
abelian $p$-group. We assume that $H\neq 1$, so $M\neq 1$.

Suppose that $p=2$. Then $\abs{M}\leq 4$ by Theorem~\ref{t2UG}.
When $\abs{M}=2$, then $H$ is a $2$-group since $H$ contains no 
elements of order $2r$, $r$ an odd prime, and $H$ is isomorphic to
a subgroup of $G$ by Theorem~\ref{t2UG}. Assume that $M$ is a 
four-group. Then $M\neq H$ since $M$ is a minimal normal subgroup of
$H$. Only $2$-elements of $H$ can centralize a nontrivial element of
$M$. It follows that $H$ is isomorphic to either $A_{4}$ or to $S_{4}$.
Both groups occur as subgroups of $G$.

Suppose that $p=3$. Then $\abs{M}=3$. Since $H$ contains no elements
of order $3r$, $r>1$, either $H=M$ or $H$ is isomorphic to $S_{3}$.
So $H$ is isomorphic to a subgroup of $G$.

Finally, suppose that $p=7$. Then $\abs{M}=7$, and $H/M$ acts faithfully
on $M$. So either $H=M$, or $H$ is isomorphic to the Frobenius
group of order $14$, or to the Frobenius group of order $21$.
The latter group occurs as a subgroup of $G$. Suppose that $\abs{H}=14$.
Then $H$ has an element $x$ of order $7$ which is conjugate to its 
inverse. But $x$ is conjugate to an element $g$ of $G$ by a unit in
$\QQ G$, whence $g$ is conjugate to its inverse by a unit in $\QQ G$,
and therefore also by an element in $G$, a contradiction. 
Thus $H$ is isomorphic to a subgroup of $G$.

It remains to prove the conjugacy statement, which will be done using
the suitable (standard) criterion from ordinary character theory (see 
\cite[Lemma~(37.6)]{Seh:93}).
We have to find a subgroup $U$ of $G$ isomorphic to $H$ and an 
isomorphism $\varphi\colon H\rightarrow U$ such that 
$\chi(h)=\chi(\varphi(h))$ for all irreducible characters $\chi$ of $G$
and all $h\in H$. We have shown that there exists an isomorphism 
$\varphi\colon H\rightarrow U$ for some $U\leq G$.
Remember that the Zassenhaus conjecture is valid for $G$.
If $\abs{H}\neq 21$ then, in view of the above 
possibilities for $H$, the isomorphism $\varphi$ has the required 
property. So assume that $\abs{H}=21$. Then we can adjust $\varphi$,
if necessary, by a group automorphism of $H$
so that $\chi(x)=\chi(\varphi(x))$ for an element $x$
of order $7$ in $H$. Then again, $\varphi$ has the required property.
\end{example}

\section{Composition Factors of finite groups of units in 
$\ZZ[\text{\rm PSL}(2,q)]$}\label{sec:CF}

We continue to let $q$ denote a power of a prime $p$. The group 
$\text{\rm PSL}(2,q)$ has, for a prime $r$ distinct from $2$ and $p$,
cyclic Sylow $r$-subgroups. A finite group of units in 
$\ZZ[\text{\rm PSL}(2,q)]$ therefore also has cyclic Sylow $r$-subgroups,
by the following theorem, which is Corollary~1 in \cite{Her:07a}.

\begin{theorem}\label{t1}
Let $G$ be a finite group having cyclic Sylow $r$-subgroups 
for some prime $r$. Then any finite $r$-subgroup of $\NU{\ZZ G}$
is isomorphic to a subgroup of $G$.
\end{theorem}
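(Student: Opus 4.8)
The plan is to reduce the statement to showing that $H$ is cyclic, and then to remove the obstructions to cyclicity one at a time. By Berman's theorem the order of $H$ divides $\abs{G}$; since $H$ is an $r$-group, $\abs{H}$ is a power of $r$ dividing the order of a Sylow $r$-subgroup $P$ of $G$. As $P$ is cyclic it has exactly one subgroup of each order dividing $\abs{P}$, so once $H$ is known to be cyclic it is isomorphic to that subgroup of $P$ and we are finished. Now a finite $r$-group all of whose abelian subgroups are cyclic, equivalently one containing no subgroup isomorphic to $C_r\times C_r$, is cyclic when $r$ is odd and is cyclic or generalized quaternion when $r=2$ (see \cite[5.4.10]{Gor:68}). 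Thus it suffices to prove that $H$ contains no copy of $C_r\times C_r$, and, in the case $r=2$, that $H$ is not generalized quaternion.

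The cyclic-Sylow hypothesis enters through the following observation. Any two subgroups of order $r$ in $G$ are conjugate: each is the unique subgroup of order $r$ of the (cyclic) Sylow subgroup containing it, hence characteristic there, and all Sylow $r$-subgroups are conjugate. Consequently every element of order $r$ in $G$ is conjugate to a power of one fixed element, so every rational-valued character $\xi$ of $G$ takes a single value $\xi_{0}$ on all elements of order $r$. Combining this with the standard partial-augmentation facts already used in the proof of Theorem~\ref{t2UG} — namely $\paug{1}{u}=0$ for $u\neq 1$, the vanishing of $\paug{g}{u}$ unless the order of $g$ divides that of $u$, and $\sum_{g}\paug{g}{u}=1$ — one finds that every unit $u$ of order $r$ in $\NU{\ZZ G}$ satisfies $\xi(u)=\xi_{0}$ for every rational-valued $\xi$. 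If now $E\cong C_r\times C_r$ were a subgroup of $H$, then $\xi|_{E}$ would equal $\xi(1)$ at the identity and $\xi_{0}$ on the remaining $r^{2}-1$ elements, so that for a nonprincipal linear character $\lambda$ of $E$ the multiplicity $\lara{\lambda,\xi_{E}}=(\xi(1)-\xi_{0})/r^{2}$ would have to be a nonnegative integer; thus $r^{2}$ would divide $\xi(1)-\xi_{0}$ for every rational-valued character $\xi$ of $G$.

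Ordinary characters alone do not yield the desired contradiction: already for $G=C_{r^{2}}$ every rational-valued $\xi$ satisfies $r^{2}\mid\xi(1)-\xi_{0}$, although $\NU{\ZZ G}$ contains no $C_r\times C_r$. The extra input must therefore be modular, and this is where I expect the real work to lie. Reducing modulo $r$, a unit of order $r$ becomes a unipotent element, while the cyclic-Sylow hypothesis forces every $r$-block of $G$ to have cyclic defect group, so the irreducible Brauer characters and the decomposition numbers are governed by a Brauer tree. The plan is to use the results on Brauer character values of torsion units from \cite{Her:07a} to express the relevant composition-factor multiplicities of a simple module (say one in the principal block, whose degree is prime to $r$) upon restriction to $E$ in terms of the partial augmentations of the elements of $E$, and then to read off from the Brauer-tree data a congruence modulo a higher power of $r$ that the ordinary-character computation cannot meet. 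Producing this congruence, and verifying that it genuinely forces a negative or non-integral multiplicity, is the main obstacle; it is precisely here that cyclicity of $P$ — as opposed to, say, a dihedral Sylow subgroup, where $C_2\times C_2$ does occur (cf.\ Theorem~\ref{t2UG}) — is indispensable.

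Finally, for $r=2$ one must also exclude generalized quaternion groups. Here the ambient situation simplifies: the automorphism group of a cyclic $2$-group is itself a $2$-group, so by Frobenius's normal $p$-complement theorem $G$ is $2$-nilpotent, and having cyclic (hence quaternion-free) Sylow $2$-subgroups it contains no generalized quaternion subgroup. The exclusion of a quaternion $H\le\NU{\ZZ G}$ then follows the pattern of the four-group exclusion in \cite{Ki:07} and of the quaternion step in Theorem~\ref{t2UG} — via the Berman--Higman theorem and the realizability over $\RR$ of a suitable character — supplemented by the same modular input as above. With generalized quaternion groups ruled out, the reductions of the first paragraph show that $H$ is cyclic, hence isomorphic to a subgroup of $P\leq G$.
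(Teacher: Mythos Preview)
The paper does not actually prove this theorem: it is quoted verbatim as Corollary~1 of \cite{Her:07a} and used as a black box. So there is no ``paper's own proof'' to compare against beyond the citation; what one can assess is whether your proposal amounts to an independent proof.

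It does not. Your reduction is correct and well organized: the order of $H$ divides $\abs{G}$, a cyclic Sylow $r$-subgroup has a unique subgroup of each admissible order, and a finite $r$-group with no $C_r\times C_r$ is cyclic or (for $r=2$) generalized quaternion. You also correctly observe that ordinary rational-valued characters cannot by themselves exclude $C_r\times C_r$, as your $G=C_{r^{2}}$ example shows. But at the decisive point you write that ``producing this congruence, and verifying that it genuinely forces a negative or non-integral multiplicity, is the main obstacle,'' and you leave it there. That is precisely the content of the theorem; everything up to that sentence is preparatory. Invoking ``the results on Brauer character values of torsion units from \cite{Her:07a}'' is circular, since that is the very paper in which the theorem is proved. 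A genuine proof has to exhibit a specific character (or virtual character) of $G$, compute its restriction to a putative $C_r\times C_r$ via partial augmentations, and derive an explicit non-integrality or negativity; the cyclic-defect machinery you allude to is indeed what \cite{Her:07a} exploits, but you have not carried out any of it.

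The $r=2$ quaternion case is likewise only gestured at. Your observation that $G$ is $2$-nilpotent (via Burnside's transfer theorem, since $\mathrm{Aut}(P)$ is a $2$-group for cyclic $P$) is correct, but ``follows the pattern of'' the arguments in \cite{Ki:07} and Theorem~\ref{t2UG} is not a proof: those arguments depend on specific characters and Schur-index information that you have not identified for a general $G$ with cyclic Sylow $2$-subgroups. In short, the strategic outline is sound, but both the $C_r\times C_r$ exclusion and the quaternion exclusion remain to be done; as written this is a plan, not a proof.
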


In the situation of the theorem, a finite subgroup $H$ of $\NU{\ZZ G}$
has cyclic Sylow $r$-subgroups. When $\abs{H}$ is divisible by $r$,
we may obtain more information about the structure of $H$, or even
quotients of $H$ whose order is divisible by $r$, when elements of order
$r$ in $H$ are conjugate to elements of $G$ by units in $\QQ G$, by
comparing the number of conjugacy classes of elements of order $r$ in $H$
(or the quotient of $H$) with the corresponding number of classes in $G$.

We begin with an elementary group-theoretical observation.
For a finite group $G$, we write $\cclr{G}$ for the 
set of the conjugacy classes of elements of order $r$ in $G$.

\begin{lemma}\label{lem1}
Let $H$ be a finite group with cyclic Sylow $r$-subgroups for some prime
divisor $r$ of the order of $H$, and let $X$ be a quotient of $H$
whose order is divisible by $r$. Then $\abs{\cclr{X}}\geq\abs{\cclr{H}}$.
\end{lemma}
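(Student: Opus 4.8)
The plan is to reduce the statement to a single comparison of two subgroups of the cyclic group $\operatorname{Aut}(C)\cong(\ZZ/r)^{\times}$ of order $r-1$. Let $P$ be a Sylow $r$-subgroup of $H$; it is cyclic, so it has a unique subgroup $C$ of order $r$. Every element of order $r$ in $H$ generates a conjugate of $C$, and two generators of $C$ are conjugate in $H$ if and only if they are conjugate in $N_{H}(C)$, since any conjugating element carries $C$ to $C$. Writing $A_{H}$ for the image of the resulting map $N_{H}(C)\to\operatorname{Aut}(C)$ and noting that $\operatorname{Aut}(C)$ acts simply transitively on the $r-1$ generators of $C$, I obtain $\abs{\cclr{H}}=(r-1)/\abs{A_{H}}$. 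Let $\pi\colon H\to X$ be the quotient map with kernel $N$. As $\pi$ is surjective, $\bar P:=\pi(P)$ is a cyclic Sylow $r$-subgroup of $X$, nontrivial because $r$ divides $\abs{X}$; let $\bar C$ be its unique subgroup of order $r$. The same bookkeeping gives $\abs{\cclr{X}}=(r-1)/\abs{A_{X}}$ with $A_{X}=\operatorname{im}(N_{X}(\bar C)\to\operatorname{Aut}(\bar C))$. After fixing identifications $\operatorname{Aut}(C)\cong(\ZZ/r)^{\times}\cong\operatorname{Aut}(\bar C)$, the assertion $\abs{\cclr{X}}\geq\abs{\cclr{H}}$ becomes equivalent to the single containment $A_{X}\subseteq A_{H}$.

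To establish $A_{X}\subseteq A_{H}$ I would pass from $N_{X}(\bar C)$ down to the Sylow normalizer and then lift. First, since $\bar P$ is an abelian Sylow $r$-subgroup of $X$, Burnside's fusion theorem says that two generators of $\bar C$ are $X$-conjugate precisely when they are $N_{X}(\bar P)$-conjugate; as $\bar C$ is characteristic in $\bar P$ we have $N_{X}(\bar P)\leq N_{X}(\bar C)$, and it follows that $A_{X}$ is already the image of $N_{X}(\bar P)\to\operatorname{Aut}(\bar C)$. Next, a Frattini argument identifies this normalizer: given $\bar g\in N_{X}(\bar P)$, lift it to $g\in H$; then $P$ and $gPg^{-1}$ are both Sylow $r$-subgroups of $\pi^{-1}(\bar P)=PN$, hence conjugate by some $h\in PN$, so $hg\in N_{H}(P)$ while $\pi(h)\in\bar P$, whence $N_{X}(\bar P)=\bar P\cdot\pi(N_{H}(P))$. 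Because $\bar P$ is abelian it centralizes $\bar C$, so $A_{X}$ equals the image of $\pi(N_{H}(P))$ in $\operatorname{Aut}(\bar C)$.

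Finally I would compare this image with $A_{H}$ explicitly. For $g\in N_{H}(P)$, conjugation sends a generator $x$ of $P$ to $x^{u}$ for some integer $u$ coprime to $r$; the induced action on $C$ is raising to the power $u\bmod r$, and, since $\bar C$ is the order-$r$ subgroup of $\bar P=\pi(P)$, the action of $\pi(g)$ on $\bar C$ is raising to the same power $u\bmod r$. Hence, under the chosen identifications, $\operatorname{im}(\pi(N_{H}(P))\to\operatorname{Aut}(\bar C))$ coincides with $\operatorname{im}(N_{H}(P)\to\operatorname{Aut}(C))$, which is contained in $A_{H}=\operatorname{im}(N_{H}(C)\to\operatorname{Aut}(C))$ because $N_{H}(P)\leq N_{H}(C)$, as $C$ is characteristic in $P$. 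Combining the three steps yields $A_{X}\subseteq A_{H}$ and therefore $\abs{\cclr{X}}\geq\abs{\cclr{H}}$.

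I expect the main obstacle to be the first step of the middle paragraph: controlling $X$-fusion of the order-$r$ elements by the Sylow normalizer $N_{X}(\bar P)$. A priori $N_{X}(\bar C)$ can be strictly larger than $N_{X}(\bar P)$ and could fuse generators of $\bar C$ in ways not visible inside $H$; this is exactly what would break the inequality, and it is the invocation of Burnside's theorem, legitimate because the Sylow $r$-subgroups are cyclic and hence abelian, that rules it out. Everything else is bookkeeping with characteristic subgroups of cyclic groups, and notably I would not need the analogous fusion control inside $H$ itself, which is precisely why the argument delivers the one-sided bound $\geq$ rather than an equality.
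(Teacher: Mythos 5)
Your proof is correct, but it takes a genuinely different route from the paper's. The paper argues at the level of elements: it picks an $r$-element $x\in H$ whose image $\bar{x}$ has order $r$, observes that $\lara{x}$ is a Sylow $r$-subgroup of $\lara{x}N$ (the preimage of $\lara{\bar{x}}$), and, given $\bar{x}^{\bar{c}}=\bar{x}^{i}$, corrects the lifted element $c$ by some $a\in N$ via Sylow conjugacy inside $\lara{x}N$ so that $x^{ca}\in x^{i}\lara{x^{r}}$; raising to the $r^{n-1}$-st power then transfers this fusion to the order-$r$ element $y=x^{r^{n-1}}$, showing directly that fusion of order-$r$ classes in $X$ lifts to fusion in $H$. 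You instead work at the level of subgroups and automorphism images: you reduce the inequality to the containment $A_{X}\subseteq A_{H}$ of fusion subgroups of $(\ZZ/r)^{\times}$, and prove it by Burnside's fusion theorem (to replace $N_{X}(\bar{C})$ by $N_{X}(\bar{P})$) followed by a Frattini-type argument in $PN=\pi^{-1}(\bar{P})$ (to lift to $N_{H}(P)$). The underlying mechanism is the same in both proofs, namely Sylow conjugacy inside the preimage of an $r$-local subgroup of $X$, but the paper's version is more elementary and self-contained (it needs nothing beyond Sylow's theorem, no fusion theory, and handles everything in one element computation), while yours is more conceptual: it isolates the control-of-fusion structure, avoids the paper's case distinction on the order of $x$, and makes explicit the slightly stronger conclusion that $\abs{\cclr{H}}$ divides $\abs{\cclr{X}}$, since $A_{X}\subseteq A_{H}$ forces $\abs{A_{X}}$ to divide $\abs{A_{H}}$ (this divisibility is implicit in the paper's argument as well, but never stated). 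One small imprecision: your claim that the inequality \emph{becomes equivalent to} $A_{X}\subseteq A_{H}$ overstates things, since containment is a priori stronger than $\abs{A_{X}}\leq\abs{A_{H}}$ for subgroups of a cyclic group; this is harmless, as you prove the containment itself.
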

\begin{proof}
Let $N$ be the normal subgroup of $H$ with $X=\bar{H}=H/N$, and
let $x$ be a $r$-element in $H$ such that $\bar{x}$ has order $r$.
Note that the subgroup lattice of a Sylow $r$-subgroup of $H$ is
linearly ordered.
By Sylow's theorem, $\lara{\bar{x}}$ contains representatives
of the conjugacy classes of elements of order $r$ in $\bar{H}$, and
$\lara{x}$ is a Sylow $p$-subgroup of $\lara{x}N$.

Suppose that $x$ has order $r$. Then, if $\bar{x}$ is conjugate to
$\bar{x}^{i}$ in $\bar{H}$ for some integer $i$, Sylow's theorem 
implies that $x$ is conjugate to $x^{i}$ in $H$.
So $\abs{\cclr{X}}\geq\abs{\cclr{H}}$ in this case.
Let us therefore assume that the order of $x$ is $r^{n}$, for some
$n>1$. Set $y=x^{r^{n-1}}\in N$. Then $\lara{y}$ contains 
representatives of the conjugacy classes of elements of order $r$ in $H$.
Suppose that $\bar{x}$ is conjugate to $\bar{x}^{i}$ in $\bar{H}$ for
some integer $i$. Let $c\in H$ with $\bar{x}^{\bar{c}}=\bar{x}^{i}$.
By Sylow's theorem, $x^{ca}\in\lara{x}\cap x^{i}N=x^{i}\lara{x^{r}}$
for some $a\in N$. Raising elements to the $r^{n-1}$th power yields
$y^{ca}=y^{i}$. This shows that $\abs{\cclr{X}}\geq\abs{\cclr{H}}$.
\end{proof}

As indicated before, we will use this as follows.

\begin{lemma}\label{lem2}
Let $G$ be a finite group with cyclic Sylow $r$-subgroups for some
prime $r$. Suppose that each unit of order $r$ in $\NU{\ZZ G}$ is 
conjugate to some element of $G$ by a unit in $\QQ G$. Let $X$ be
a quotient of a finite subgroup of $\NU{\ZZ G}$ whose order is 
divisible by $r$.  Then $\abs{\cclr{X}}\geq\abs{\cclr{G}}$.
\end{lemma}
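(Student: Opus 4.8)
The plan is to combine Lemma~\ref{lem1} with the rational-conjugacy hypothesis to compare the fusion of $r$-elements in $H$ and in $G$ directly. Write $X=H/N$ for a finite subgroup $H$ of $\NU{\ZZ G}$; since $\abs{X}$ divides $\abs{H}$, the order of $H$ is divisible by $r$, and by Theorem~\ref{t1} every finite $r$-subgroup of $\NU{\ZZ G}$ embeds in $G$, so a Sylow $r$-subgroup of $H$ is cyclic. Lemma~\ref{lem1} then gives $\abs{\cclr{X}}\geq\abs{\cclr{H}}$, and it remains to prove $\abs{\cclr{H}}\geq\abs{\cclr{G}}$.

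To do this I would fix a Sylow $r$-subgroup $P$ of $H$ and let $\lara{y}$ be its unique subgroup of order $r$. Every element of order $r$ in $H$ is conjugate into $\lara{y}$, and if $(y^{i})^{c}=y^{j}$ for some $c\in H$ then $c$ normalizes $\lara{y^{i}}=\lara{y}$; hence the number of $H$-classes of elements of order $r$ equals the number of orbits of $T_{H}$ on the $r-1$ generators of $\lara{y}$, where $T_{H}$ is the image of $N_{H}(\lara{y})$ in $\text{Aut}(\lara{y})\cong(\ZZ/r\ZZ)^{\times}$. This action is free, so $\abs{\cclr{H}}=(r-1)/\abs{T_{H}}$. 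By hypothesis, $y$ is conjugate in $\QQ G$ to some $g_{0}\in G$ of order $r$; since Sylow $r$-subgroups of $G$ are cyclic, $\lara{g_{0}}$ is the unique subgroup of order $r$ in a Sylow $r$-subgroup of $G$, and the same bookkeeping yields $\abs{\cclr{G}}=(r-1)/\abs{T_{G}}$ with $T_{G}$ the image of $N_{G}(\lara{g_{0}})$. Thus it suffices to show $T_{H}\subseteq T_{G}$, which gives $\abs{T_{H}}\leq\abs{T_{G}}$ and hence the desired inequality.

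The crux is the containment $T_{H}\subseteq T_{G}$, and this is where the rational-conjugacy hypothesis enters. Let $t\in T_{H}$, so $y$ is conjugate to $y^{t}$ in $N_{H}(\lara{y})\subseteq\NU{\ZZ G}$, hence in $\QQ G$. Writing $y=u^{-1}g_{0}u$ for a unit $u\in\QQ G$ and raising to the $t$th power gives $y^{t}=u^{-1}g_{0}^{t}u$, so $y^{t}\sim g_{0}^{t}$ over $\QQ G$; combined with $g_{0}\sim y\sim y^{t}$ over $\QQ G$ this yields that $g_{0}$ and $g_{0}^{t}$ are conjugate in $\QQ G$. Since the ordinary irreducible characters of $G$ are class functions separating the conjugacy classes of $G$, two elements of $G$ conjugate in $\QQ G$ are already conjugate in $G$; therefore $g_{0}^{t}$ is conjugate to $g_{0}$ in $G$, i.e.\ $t\in T_{G}$. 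I expect the main point to be precisely this step: checking that rational conjugacy is transitive through $G$ and commutes with the power maps $u\mapsto u^{t}$, so that $H$-fusion among the powers of $y$ is forced to be a refinement of $G$-fusion among the powers of $g_{0}$. Everything else reduces to elementary counting in the cyclic $r$-group $\lara{y}$.
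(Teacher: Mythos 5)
Your proposal is correct and takes essentially the same approach as the paper: reduce to a subgroup $H$ of $\NU{\ZZ G}$ via Theorem~\ref{t1} and Lemma~\ref{lem1}, then transfer fusion among the powers of an element of order $r$ in $H$ to fusion in $G$ using conjugacy in $\QQ G$ together with the fact that ordinary characters separate the conjugacy classes of $G$. Your explicit orbit count $(r-1)/\abs{T_{H}}$ via normalizer images in $\text{Aut}(\lara{y})$ merely spells out the class-counting that the paper's proof leaves implicit.
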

\begin{proof}
A finite subgroup of $\NU{\ZZ G}$ has cyclic Sylow $r$-subgroups,
by Theorem~\ref{t1}. So we can assume, by Lemma~\ref{lem1},
that $X$ is a subgroup $H$ of $\NU{\ZZ G}$. Let $g$ be an element
of order $r$ in $G$. By assumption, each element of order $r$ in $H$
is conjugate to some power of $g$ by a unit in $\QQ G$. Take an element
$h$ in $H$ which is conjugate to $g$ by a unit in $\QQ G$. If $h$ 
is conjugate to $h^{i}$ in $H$ for some integer $i$, then $g$ is 
conjugate to $g^{i}$ by a unit in $\QQ G$, which implies that 
$g$ and $g^{i}$ are conjugate in $G$. Hence 
$\abs{\cclr{H}}\geq\abs{\cclr{G}}$, and the lemma is proven.
\end{proof}

As for the groups $\text{\rm PSL}(2,q)$, the lemma can be applied through
the following proposition, which is Proposition~6.4 in \cite{Her:05b}.

\begin{proposition}\label{prop6.4}
Let $G=\text{\rm PSL}(2,q)$, and let $r$ be a prime distinct from $p$  
(of which $q$ is a power). Then any torsion unit in $\NU{\ZZ G}$ of
order $r$ is conjugate to an element of $G$ by a unit in $\QQ G$.
\end{proposition}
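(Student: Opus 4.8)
The plan is to reduce the assertion, by the Marciniak--Ritter--Sehgal--Weiss criterion, to a nonnegativity statement about partial augmentations, and then to extract that nonnegativity from a single well-chosen $p$-modular Brauer character. Let $u\in\NU{\ZZ G}$ have order $r$. The divisors of $r$ are $1$ and $r$, and since $u^{r}=1$, only the partial augmentations of $u$ itself are at issue; by that criterion it therefore suffices to show that all $\paug{C}{u}$ are nonnegative, for then, being integers summing to $1$, exactly one equals $1$ and $u$ is conjugate in $\QQ G$ to an element of the corresponding class. By the Berman--Higman theorem $\paug{1}{u}=0$, and since $\paug{C}{u}=0$ whenever the order of the elements in $C$ does not divide $r$, the only classes carrying a nonzero partial augmentation consist of elements of order $r$. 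As $r\neq p$ is (in the main case) an odd prime dividing exactly one of $q-1$, $q+1$, these elements lie in a single cyclic torus, and since the Sylow $r$-subgroup is cyclic they form $\tfrac{1}{2}(r-1)$ classes $[z^{k}]$ with $z^{k}\sim z^{-k}$, where $z$ is a fixed element of order $r$. Writing $f_{k}=\paug{[z^{k}]}{u}$, we have $\sum_{k}f_{k}=1$, and the goal becomes $f_{k}\geq 0$ for all $k$. (For $r=2$ there is a single class of involutions and the claim is immediate.)

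The engine, and the reason the defining characteristic is the right setting, is the following. Let $W$ be the $3$-dimensional module $\text{\rm Sym}^{2}$ of the natural $\text{\rm SL}(2,q)$-module; for $p$ odd the element $-I$ acts trivially on $W$, so $W$ is a self-dual $p$-modular $\text{\rm PSL}(2,q)$-module (when $p=2$ one uses the $2$-dimensional natural module directly), and its Brauer character $\varphi$ takes the value $\omega^{2k}+1+\omega^{-2k}$ on $z^{k}$ for an $r$-th root of unity $\omega$. Because the eigenvalues of any torsion unit of order $r$ on $W$ are $r$-th roots of unity, one has the Galois relation $\varphi(u^{d})=\sigma_{d}(\varphi(u))$, where $\sigma_{d}$ sends $\omega\mapsto\omega^{d}$; applied to $\varphi(u)=\sum_{k}f_{k}\varphi(z^{k})$ this expresses every $\varphi(u^{d})$ through the $f_{k}$ alone, so the partial augmentations of the powers $u^{d}$ never enter. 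Feeding this into the discrete Fourier expansion $\mu_{s}=\tfrac{1}{r}\sum_{d=0}^{r-1}\varphi(u^{d})\omega^{-sd}$ for the multiplicity of the eigenvalue $\omega^{s}$ of $u$ on $W$, the constant term $\varphi(1)=3$ cancels against $3\sum_{k}f_{k}=3$, leaving $\mu_{s}=f_{k}$ for the unique class $k$ with $2k\equiv\pm s\pmod r$. Since $\mu_{s}$ is the dimension of an eigenspace it is nonnegative, and as $s$ ranges over the nonzero residues every $f_{k}$ is reached; hence $f_{k}\geq 0$ for all $k$.

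This establishes the nonnegativity, and the reduction of the first paragraph then yields the claim, the nonsplit case $r\mid q+1$ being identical with the eigenvalues of the torus elements lying in $\mathbb{F}_{q^{2}}$. I expect the main obstacle to be not the final computation but the choice of tool: the ordinary irreducible characters $\chi_{i}$ and $\theta_{j}$, whose degrees are about $q$, give only a bound like $f_{k}\geq-\tfrac{1}{r}(q-1)$, because their background eigenvalue multiplicity $\tfrac{1}{r}(q-1)$ does not vanish, whereas the smallness of $W$ is exactly what forces that background term to cancel and makes the multiplicity equal the partial augmentation. The subsidiary points requiring care are the descent from $\text{\rm SL}(2,q)$ to $\text{\rm PSL}(2,q)$ (handled by passing to $\text{\rm Sym}^{2}$, on which $-I$ acts trivially) and the verification that the Brauer character of $W$ has the stated root-of-unity values on the torus classes.
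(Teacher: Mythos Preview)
Your argument is correct and is precisely the modular Luthar--Passi method; the present paper does not prove this proposition but quotes it as Proposition~6.4 of \cite{Her:05b}, where the same approach---Brauer characters in the defining characteristic applied to the small module $\text{\rm Sym}^{2}$ of the natural $\text{\rm SL}(2,q)$-module (or the natural module itself when $p=2$)---is used to force the eigenvalue multiplicities to coincide with the partial augmentations. Your diagnosis that an ordinary character of degree about $q$ leaves a nonvanishing background multiplicity of size roughly $(q-1)/r$, whereas the $3$-dimensional modular module makes $\mu_{s}=f_{k}$ outright, captures exactly why the defining-characteristic setting is the right one here.
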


We shall need the following simple number-theoretical lemma.

\begin{lemma}\label{lem3}
Let $a$, $n$ and $m$ be natural integers.
Then $a^{n}-1$ divides $a^{m}-1$ if and only if $n$ divides $m$.
\end{lemma}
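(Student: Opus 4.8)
The plan is to prove both directions of the biconditional, with the nontrivial content residing in the forward implication. The statement is the classical fact that in the multiplicative semigroup structure of integers of the form $a^k-1$, divisibility is governed entirely by divisibility of exponents. I would first dispose of the easy direction: if $n$ divides $m$, write $m=nd$ and observe the polynomial factorization
\[
a^{m}-1=(a^{n})^{d}-1=(a^{n}-1)\bigl((a^{n})^{d-1}+(a^{n})^{d-2}+\cdots+a^{n}+1\bigr),
\]
which exhibits $a^{n}-1$ as a divisor of $a^{m}-1$ explicitly.

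For the converse, my preferred approach is the division algorithm combined with the elementary fact that $\gcd(a^{n}-1,a^{m}-1)=a^{\gcd(n,m)}-1$, but since that gcd identity is itself essentially what we are proving, I would instead argue directly. Write $m=qn+\rho$ with $0\le\rho<n$ by division with remainder. Then modulo $a^{n}-1$ we have $a^{n}\equiv 1$, so
\[
a^{m}=a^{qn+\rho}=(a^{n})^{q}\,a^{\rho}\equiv a^{\rho}\pmod{a^{n}-1},
\]
and therefore $a^{m}-1\equiv a^{\rho}-1\pmod{a^{n}-1}$. If $a^{n}-1$ divides $a^{m}-1$, this congruence forces $a^{n}-1$ to divide $a^{\rho}-1$. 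But $0\le\rho<n$, so $0\le a^{\rho}-1<a^{n}-1$ (here one uses $a\ge 2$; the degenerate case $a=1$, where both sides are $0$ and the statement is vacuously true, should be noted separately). A nonnegative integer strictly smaller than $a^{n}-1$ can only be divisible by $a^{n}-1$ if it is zero, so $a^{\rho}-1=0$, giving $\rho=0$ and hence $n\mid m$.

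The only genuine obstacle, and it is a minor one, is bookkeeping around small values of $a$: for $a=1$ every $a^{k}-1=0$ and divisibility among zeros must be interpreted conventionally, while for $a\ge 2$ the size estimate $a^{\rho}-1<a^{n}-1$ for $\rho<n$ is what drives the argument. I would state at the outset that the interesting case is $a\ge 2$ and handle $a=1$ in one line. With that caveat, the proof is just the division algorithm plus a size comparison, so I would keep it to a few sentences rather than invoking heavier machinery such as the $\gcd$ identity or cyclotomic polynomials.
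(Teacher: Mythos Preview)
Your argument is correct: the factorization handles one direction, and the division-with-remainder reduction together with the size bound $0\le a^{\rho}-1<a^{n}-1$ (for $a\ge 2$) handles the other; the degenerate case $a=1$ is trivially disposed of. The paper itself gives no proof of this lemma, merely recording it as a ``simple number-theoretical lemma,'' so there is nothing to compare against --- your write-up supplies exactly the kind of short standard verification the paper omits.
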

 
Now we can prove the following two lemmas. 
The reason for doing so will be given afterwards.
For the first, we use that the number of conjugacy classes of the group 
$\text{PSL}(2,q)$ consisting of elements of order $r$, an odd prime 
divisor of the group order, is $2$ if $r=p$ and $(r-1)/2$ if $r\neq p$.

\begin{lemma}\label{lem4}
Set $G=\text{\rm PSL}(2,q)$ and let $X$ be a composition factor of a finite
subgroup of $\NU{\ZZ G}$. If $X$ is a two-dimensional projective special
linear group, then $X$ is isomorphic to a subgroup of $G$.
\end{lemma}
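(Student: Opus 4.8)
The plan is to use that a composition factor $X$ of a finite subgroup of $\NU{\ZZ G}$ is itself a quotient of a finite subgroup of $\NU{\ZZ G}$ (namely $X=B/A$ with $A\triangleleft B\leq H\leq\NU{\ZZ G}$), so that Theorems~\ref{t1} and \ref{t2UG}, Lemma~\ref{lem2} (via Proposition~\ref{prop6.4}), and the class-number formula recalled above all apply to $X$. Write $G=\text{\rm PSL}(2,q)$ with $q=p^{f}$ and, by hypothesis, $X\cong\text{\rm PSL}(2,q')$ with $q'={p'}^{f'}$; since $X$ is a composition factor it is simple, so $q'\geq 4$, and $\abs{X}$ divides the order of a finite subgroup of $\NU{\ZZ G}$, which in turn divides $\abs{G}$ (Berman), so $\abs{X}\mid\abs{G}$. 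The argument splits according to whether the defining characteristic $p'$ of $X$ equals $p$, the goal in each case being to land in one of the two families of simple subgroups on Dickson's list: the groups $\text{\rm PSL}(2,p^{m})$ with $\mathbb{F}_{p^{m}}$ a subfield of $\mathbb{F}_{q}$, and $A_{5}$ (which occurs in $G$ whenever $5\mid\abs{G}$).

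First I would treat the matching case $p'=p$, which should be short and purely arithmetical. Here the $p$-part of $\abs{\text{\rm PSL}(2,p^{f'})}$ is $p^{f'}$ while the factor $p^{2f'}-1$ is coprime to $p$, so $\abs{X}\mid\abs{G}$ forces $p^{2f'}-1\mid p^{2f}-1$. By Lemma~\ref{lem3} this gives $f'\mid f$, hence $\mathbb{F}_{q'}$ is a subfield of $\mathbb{F}_{q}$, and Dickson's list places $X$ inside $G$.

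The bulk of the work, and the step I expect to be the main obstacle, is the cross-characteristic case $p'\neq p$; the aim is to force $X\cong A_{5}$ and then invoke $5\mid\abs{G}$. I would use that a finite subgroup of $\NU{\ZZ G}$, and hence its quotient $X$, has cyclic Sylow $r$-subgroups for every prime $r\neq 2,p$ (Theorem~\ref{t1}), the transfer to $X$ resting on $PA/A\cong P/(P\cap A)$ being a quotient of a Sylow subgroup $P$. Since a Sylow $p'$-subgroup of $\text{\rm PSL}(2,{p'}^{f'})$ is elementary abelian of order ${p'}^{f'}$, if $p'$ is odd then cyclicity forces $f'=1$, so $X\cong\text{\rm PSL}(2,p')$ with $p'\geq 5$ (as $X$ is simple). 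The class-number comparison of Lemma~\ref{lem2} applied with $r=p'$, legitimate since $p'\neq 2,p$ and Proposition~\ref{prop6.4} supplies its hypothesis, then reads $2=\abs{\cclr{X}}\geq\abs{\cclr{G}}=(p'-1)/2$, whence $p'\leq 5$ and so $p'=5$, i.e.\ $X\cong A_{5}$. If instead $p'=2$, so $q$ is odd, the same quotient argument applied to Theorem~\ref{t2UG} shows the elementary abelian Sylow $2$-subgroup of $X$, of order $2^{f'}$, is a quotient of a cyclic or dihedral $2$-group; any elementary abelian such quotient has order at most $4$, so $2^{f'}\leq 4$, and with $f'\geq 2$ (from simplicity) this forces $f'=2$ and $X\cong\text{\rm PSL}(2,4)\cong A_{5}$. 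In either subcase $\abs{A_{5}}=60$ divides $\abs{G}$, so $5\mid\abs{G}$, and Dickson's list yields $A_{5}\leq G$.

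The delicate points I would watch are the following. First, the isomorphisms $A_{5}\cong\text{\rm PSL}(2,4)\cong\text{\rm PSL}(2,5)$ make the ``characteristic'' of $X$ ambiguous, so I must be sure each branch produces a genuine embedding rather than silently fixing a preferred $q'$; the uniform reduction to $A_{5}$ in the cross-characteristic case sidesteps this. Second, the class-number inequality is essential, not cosmetic: for $r\neq p'$ both $\abs{\cclr{X}}$ and $\abs{\cclr{G}}$ equal $(r-1)/2$ and yield nothing, so it is precisely the drop to $\abs{\cclr{X}}=2$ at $r=p'$ that excludes $\text{\rm PSL}(2,r)$ for odd primes $r>5$ of the wrong characteristic; this is why one must first pin down $f'=1$ (odd $p'$) or $f'=2$ ($p'=2$) from the Sylow constraints before the count can be brought to bear.
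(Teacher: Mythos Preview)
Your proposal is correct and follows essentially the same approach as the paper's proof. The only difference is a harmless reordering in the odd cross-characteristic case: you first use Theorem~\ref{t1} to force $f'=1$ and then apply Lemma~\ref{lem2}, whereas the paper applies Lemma~\ref{lem2} first (using that $\text{PSL}(2,r^{m})$ has exactly two classes of elements of order $r$ for any $m\geq 1$ when $r$ is odd) to get $r\in\{3,5\}$ and only afterwards invokes Theorem~\ref{t1} to obtain $m=1$.
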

\begin{proof}
Let $X\cong\text{PSL}(2,r^{m})$ for some prime $r$ and a natural integer
$m$. Suppose that $r=p$. Write $q=p^{f}$. That $\abs{X}$ divides 
$\abs{G}$ means that $r^{m}(r^{2m}-1)$ divides $p^{f}(p^{2f}-1)$, so $m$
divides $f$ by Lemma~\ref{lem3}, and $G$ has a subgroup isomorphic to $X$.
So let us assume that $r\neq p$. When $r=2$, Sylow $2$-subgroups of $X$
are elementary abelian, but also cyclic or dihedral groups, by 
Theorem~\ref{t2UG}. Thus $X\cong\text{PSL}(2,4)\cong A_{5}$. With $5$
dividing $\abs{X}$, also $\abs{G}$ is divisible by $5$, and $G$ has a
subgroup isomorphic to $A_{5}$. So we may assume that $r$ is odd. Then
$X$ has precisely $2$ conjugacy classes of elements of order $r$.
The order of $G$ is divisible by $r$, so $G$ has precisely $(r-1)/2$
conjugacy classes of elements of order $r$. Thus $r\in\{3,5\}$ by
Lemma~\ref{lem2}. Finite $r$-subgroups of $\NU{\ZZ G}$ are cyclic,
by Theorem~\ref{t1}. So $X$ has cyclic Sylow $r$-subgroups, that is,
$m=1$. Since $\text{PSL}(2,3)$ is not simple, it follows that $r=5$ and
$X\cong\text{PSL}(2,5)\cong A_{5}$. Again, $G$ has a subgroup 
isomorphic to $A_{5}$.
\end{proof}

\begin{lemma}\label{lem5}
For $G=\text{\rm PSL}(2,q)$, the alternating group of degree $7$ does not
occur as a composition factor of a finite subgroup of $\NU{\ZZ G}$.
\end{lemma}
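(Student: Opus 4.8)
The plan is to argue by contradiction, exploiting that $A_{7}$ has \emph{fewer} conjugacy classes of elements of a suitable prime order $r$ than $\text{\rm PSL}(2,q)$ does, and feeding this into Lemma~\ref{lem2}. The prime $r$ will be taken to be $5$ or $7$, whichever differs from $p$.

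First I would pass from the hypothesis ``$A_{7}$ is a composition factor'' to the statement ``$A_{7}$ is a quotient of a finite subgroup of $\NU{\ZZ G}$''. Since $A_{7}$ is simple, a composition series of a finite subgroup $H\leq\NU{\ZZ G}$ in which $A_{7}$ occurs exhibits subgroups $L\trianglelefteq K\leq H$ with $K/L\cong A_{7}$; as $K$ is again a finite subgroup of $\NU{\ZZ G}$, this is exactly the input required by Lemma~\ref{lem2}. Because $\abs{A_{7}}=2^{3}\cdot 3^{2}\cdot 5\cdot 7$ divides $\abs{K}$ and hence $\abs{G}$ (Berman), both $5$ and $7$ divide $\abs{G}$.

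Next I would record the class data on both sides. Reading off cycle types in $A_{7}$: the elements of order $5$ are the $5$-cycles, and the repeated fixed points prevent the $S_{7}$-class from splitting, so $A_{7}$ has exactly one class of elements of order $5$; the elements of order $7$ are the $7$-cycles, whose $S_{7}$-class does split, so $A_{7}$ has exactly two classes of elements of order $7$. On the other side, by the count recalled before Lemma~\ref{lem4}, $\text{\rm PSL}(2,q)$ has $(r-1)/2$ classes of elements of order $r$ for every odd prime $r\neq p$ dividing $\abs{G}$.

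Finally I would apply Lemma~\ref{lem2} with the prime $r\in\{5,7\}$ chosen so that $r\neq p$, which is always possible since $p$ cannot equal both $5$ and $7$. For such $r$ the group $G$ has cyclic Sylow $r$-subgroups (as $r\neq 2,p$), and by Proposition~\ref{prop6.4} every unit of order $r$ in $\NU{\ZZ G}$ is conjugate to an element of $G$ by a unit in $\QQ G$; thus Lemma~\ref{lem2} yields $\abs{\cclr{A_{7}}}\geq\abs{\cclr{G}}=(r-1)/2$. For $r=5$ this reads $1\geq 2$ and for $r=7$ it reads $2\geq 3$, a contradiction either way. The one point needing care---the main obstacle---is that the two class counts differ by \emph{exactly one}, so the argument has no slack: it depends on getting the $A_{7}$ class numbers and the $\text{\rm PSL}(2,q)$ formula exactly right, and on invoking Lemma~\ref{lem2} only for a prime $r\neq p$ (which is precisely what forces the case split). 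Everything else is bookkeeping with small integers.
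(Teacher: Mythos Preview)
Your argument is correct. The core mechanism---Lemma~\ref{lem2} combined with Proposition~\ref{prop6.4} and the class count $(r-1)/2$ for $r\neq p$---is the same as the paper's, but the two proofs handle the unknown characteristic $p$ differently. The paper first invokes Theorem~\ref{t1}: since $A_{7}$ has noncyclic Sylow $3$-subgroups, so must $G$, forcing $p=3$; it then applies Lemma~\ref{lem2} with the single prime $r=5$. You instead bypass Theorem~\ref{t1} entirely, observing that at least one of $5,7$ differs from $p$ and that either choice yields a contradiction ($1\geq 2$ or $2\geq 3$). Your route is slightly more self-contained for this lemma, at the cost of a case split and needing the $7$-class count in $A_{7}$; the paper's route pins down $p$ first and then needs only the $r=5$ comparison.
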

\begin{proof}
Suppose, by way of contradiction, that $A_{7}$ is a composition factor
of a finite subgroup of $\NU{\ZZ G}$. Since $A_{7}$ has noncyclic Sylow
$3$-subgroups, Theorem~\ref{t1} shows that $G$ also has noncyclic Sylow
$3$-subgroups. Hence $p=3$. Furthermore, $5$ divides $\abs{G}$, and $G$
has $2$ conjugacy classes of elements of order $5$.
But $A_{7}$ has only one conjugacy class of elements of order $5$.
This contradicts Lemma~\ref{lem2}.
\end{proof}

The main result of this section, which identifies the composition
factors of finite groups of units in $\ZZ[\text{\rm PSL}(2,q)]$, follows
easily if we do not mind using part of the classification of the finite
simple groups. Groups with dihedral Sylow $2$-subgroups were classified by
Gorenstein and Walter in the three papers \cite{GoWa:62,GoWa:64,GoWa:65}
(see \cite[\S\,16.3]{Gor:68}). In particular, if $G$ is a simple group
with dihedral Sylow $2$-subgroups, it is isomorphic to either 
$\text{PSL}(2,q)$, $q$ odd, $q>3$, or to $A_{7}$.
Groups with abelian Sylow $2$-subgroups were classified by Walter
\cite{Wal:69} (a short proof was obtained by Bender \cite{Ben:70}).
In particular, if $G$ is a nonabelian simple group with abelian Sylow 
$2$-subgroups, it is isomorphic to either $\text{PSL}(2,q)$, for certain
$q$, to the Janko group $J_{1}$, or to a Ree group (for a description of
these groups, see \cite[Chapter~XI, \S\,13]{HuBl:82}). 

\begin{theorem}\label{thm6}
For $G=\text{\rm PSL}(2,q)$, each composition factor of a finite subgroup 
of $\NU{\ZZ G}$ is isomorphic to a subgroup of $G$.
\end{theorem}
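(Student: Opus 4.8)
The plan is to determine the isomorphism type of a nonabelian simple composition factor $X$ from the structure of the Sylow $2$-subgroups of finite subgroups of $\NU{\ZZ G}$ together with the two classification results quoted above, and then to eliminate the few surviving possibilities with the machinery already in place (Theorem~\ref{t1}, Lemma~\ref{lem2}, Lemma~\ref{lem4} and Lemma~\ref{lem5}).

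Let $H$ be a finite subgroup of $\NU{\ZZ G}$ and let $X$ be a composition factor of $H$. First I would dispose of the abelian case: if $X$ is abelian then it is cyclic of prime order $s$, and since $s$ divides $\abs{H}$ and hence $\abs{G}$, the group $G$ contains an element of order $s$, so $X$ embeds in $G$. Thus I may assume that $X$ is nonabelian simple.

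Next I would use the Sylow $2$-structure. By Theorem~\ref{t2UG}, together with the case $q$ even treated at the start of \S\,\ref{sec:2groups}, a Sylow $2$-subgroup of $H$ is abelian or dihedral. A Sylow $2$-subgroup of $X$, being isomorphic to a section of a Sylow $2$-subgroup of $H$, is then again abelian or dihedral. The Gorenstein--Walter and Walter theorems quoted above now restrict $X$ to four types: $X\cong\text{\rm PSL}(2,r^{m})$ for some prime power $r^{m}$, $X\cong A_{7}$, $X\cong J_{1}$, or $X$ a Ree group. Lemma~\ref{lem4} disposes of the first type and Lemma~\ref{lem5} rules out $A_{7}$.

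The hard part will be to eliminate the Janko group $J_{1}$ and the Ree groups; these are exactly the simple groups permitted by the classification which have elementary abelian Sylow $2$-subgroups of order $8$ while not being of type $\text{\rm PSL}(2,\cdot)$. When $q$ is odd, a Sylow $2$-subgroup of $H$ is cyclic or dihedral and so has no elementary abelian section of order $8$; since the Sylow $2$-subgroup of $X$ would be such a section, both $J_{1}$ and the Ree groups are excluded simultaneously. When $q$ is even, so that $p=2$, I would argue by two separate features: a Ree group has noncyclic Sylow $3$-subgroups, whereas $G$ has cyclic Sylow $3$-subgroups because $p\neq 3$, so by Theorem~\ref{t1} the groups $H$ and $X$ have cyclic Sylow $3$-subgroups as well, a contradiction; and $J_{1}$ has a single conjugacy class of elements of order $7$, while $G$ has $(7-1)/2=3$ such classes, so Proposition~\ref{prop6.4} and Lemma~\ref{lem2} would force $3\leq 1$, which is absurd. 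Hence $X$ is isomorphic to a subgroup of $G$.
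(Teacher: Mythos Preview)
Your proof is correct and follows essentially the same route as the paper: both use Theorem~\ref{t2UG} to pin down the Sylow $2$-structure of $X$, invoke the Gorenstein--Walter and Walter classifications, and then dispose of the residual cases via Lemmas~\ref{lem4}, \ref{lem5}, Theorem~\ref{t1} (Ree groups), and Lemma~\ref{lem2} with Proposition~\ref{prop6.4} ($J_{1}$). The only organizational difference is that the paper splits on the parity of $p$ before applying the classification, so $J_{1}$ and the Ree groups never even enter the odd-$q$ discussion, whereas you merge the two classifications and afterwards eliminate $J_{1}$ and the Ree groups for odd $q$ by the (correct) observation that a cyclic or dihedral $2$-group has no elementary abelian section of order $8$.
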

\begin{proof}
We only have to consider a nonabelian composition factor $X$.
Suppose $p$ (of which $q$ is a power) is odd. 
Then $X$ has dihedral Sylow $2$-subgroups by
Theorem~\ref{t2UG}. By the classification of the finite simple groups
with dihedral Sylow $2$-subgroups, Lemmas~\ref{lem4} and \ref{lem5} show
that $X$ is isomorphic to a subgroup of $G$. So assume that $p=2$.
Then $X$ has abelian Sylow $2$-subgroups, as noted at the beginning of 
\S\,\ref{sec:2groups}. By Lemma~\ref{lem4}, and the 
classification of the finite nonabelian simple groups with abelian Sylow 
$2$-subgroups, we have to consider the possibility that $X$ is isomorphic
to the Janko group $J_{1}$, or to a Ree group. The group $J_{1}$ has
elements of order $7$, all of which are conjugate. So $X$ is not
isomorphic to $J_{1}$ by Lemma~\ref{lem2}. 
A Ree group has a noncyclic subgroup of order $9$ while $G$ has cyclic
Sylow $3$-subgroups. So $X$ is not isomorphic to a Ree group by
Theorem~\ref{t1}. The proof is complete.
\end{proof}

We end with an application of the last theorem.
The classification of all minimal finite simple groups 
(those for which every proper subgroup is solvable) is obtained 
as a corollary of the major work of Thompson on $N$-groups
(consisting of the six papers \cite{Tho:68,Tho:70}).
A minimal finite nonabelian simple group is isomorphic to either 
$\text{PSL}(2,q)$, for certain $q$, or to $\text{PSL}(3,3)$,
or to a Suzuki group $\text{\rm Sz}(2^{p})$ for some odd prime $p$.
  
\begin{theorem}\label{thm7}
Let $G$ be a minimal finite simple group. Then a finite subgroup of
$\NU{\ZZ G}$ of order strictly smaller than $\abs{G}$ is solvable.
\end{theorem}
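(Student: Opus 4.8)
The plan is to argue by contradiction. Suppose that $H$ is a finite subgroup of $\NU{\ZZ G}$ with $\abs{H}<\abs{G}$ that is \emph{not} solvable. Then $H$ has a nonabelian composition factor $X$, which is a nonabelian simple group. By Berman's theorem (recalled in \S\ref{Sec:intro}), $\abs{H}$ divides $\abs{G}$, and since $\abs{X}$ divides $\abs{H}$ we have that $\abs{X}$ divides $\abs{G}$ with $\abs{X}\leq\abs{H}<\abs{G}$. The whole proof then consists in showing, for each of the three types of minimal finite simple group recalled above, that $X$ must be isomorphic to $G$ itself; for then $\abs{X}=\abs{G}$ divides $\abs{H}<\abs{G}$, which is absurd.

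When $G\cong\text{PSL}(2,q)$, this is immediate: by Theorem~\ref{thm6} the composition factor $X$ is isomorphic to a subgroup of $G$, and since $G$ is minimal simple its proper subgroups are solvable, so the nonsolvable group $X$ must equal $G$. When $G\cong\text{PSL}(3,3)$, I would run a pure order count. Here $\abs{G}=2^{4}\cdot 3^{3}\cdot 13$, and scanning the nonabelian simple groups of order at most $\abs{G}$ shows that $\text{PSL}(3,3)$ is the only one whose order divides $2^{4}\cdot 3^{3}\cdot 13$; hence $X\cong G$.

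The Suzuki case $G\cong\text{Sz}(2^{p})$, with $p$ an odd prime, requires slightly more. First I would note that $\abs{G}=2^{2p}(2^{p}-1)(2^{2p}+1)$ is prime to $3$: since $p$ is odd one has $2^{p}\equiv -1$ and $2^{2p}\equiv 1\pmod 3$, so $3$ divides none of the three factors. Thus $3\nmid\abs{X}$, and by the classification of the nonabelian simple groups of order prime to $3$ (which are exactly the Suzuki groups) we get $X\cong\text{Sz}(2^{k})$ for some odd $k\geq 3$. It then remains to show that $\abs{\text{Sz}(2^{k})}=2^{2k}(2^{k}-1)(2^{2k}+1)$ dividing $\abs{G}$ forces $k=p$. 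Comparing $2$-parts gives $k\leq p$. Moreover $2^{2k}+1$ divides the odd part $(2^{p}-1)(2^{2p}+1)$ of $\abs{G}$, hence divides $2^{4p}-1$, while it also divides $2^{4k}-1$; therefore it divides $\gcd(2^{4k}-1,2^{4p}-1)=2^{4\gcd(k,p)}-1$ (in the spirit of Lemma~\ref{lem3}). As $p$ is prime, either $p\mid k$ or $2^{2k}+1$ divides $2^{4}-1=15$, and the latter is impossible for $k\geq 3$. So $p\mid k$, and together with $k\leq p$ this gives $k=p$ and $X\cong G$.

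The main obstacle is clearly the Suzuki case: it is the only one that is settled neither by the section result Theorem~\ref{thm6} nor by a finite order check. It leans on the (classification) fact that the Suzuki groups are precisely the nonabelian simple groups of order coprime to $3$, together with the elementary number-theoretic step that pins down the defining field. Once the reduction to showing ``$X\cong G$'' is set up, the other two cases are routine.
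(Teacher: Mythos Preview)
Your proof is correct and follows essentially the same line as the paper's: reduce to a nonabelian simple composition factor, then handle the three families of minimal simple groups separately, using Theorem~\ref{thm6} for $\text{PSL}(2,q)$, an order check for $\text{PSL}(3,3)$, and the ``only Suzuki groups have order prime to $3$'' fact plus an arithmetic argument for $\text{Sz}(2^{p})$. The only notable difference is cosmetic: in the Suzuki case the paper works with the factor $2^{k}-1$ of $\abs{\text{Sz}(2^{k})}$ (which divides $2^{4p}-1$, so $k\mid 4p$ by Lemma~\ref{lem3}, and then $k$ odd and $k\geq 3$ force $k=p$ immediately), whereas you use $2^{2k}+1$ together with a $\gcd$ argument and the comparison of $2$-parts; both reach $k=p$, but the paper's route is a bit shorter.
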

\begin{proof}
Let $H$ be a nontrivial finite subgroup of $\NU{\ZZ G}$ of order strictly 
smaller than $\abs{G}$. We have to show that $H$ is solvable.
Let $N$ be a maximal normal subgroup of $H$. 
By induction on $\abs{H}$ we can assume that $N$ is solvable, and have 
to show that the simple group $H/N$ is solvable.

If the order of a nonabelian simple group $K$ divides
$\abs{\text{\rm PSL}(3,3)}$, then 
$\abs{K}=\abs{\text{\rm PSL}(3,3)}=2^{4}.3^{3}.13$.
(One even has $K\cong\text{\rm PSL}(3,3)$.)
This is either proved directly or by looking at a list of simple groups
of small order, e.g.\ in \cite{CCNPW:85}.
So we can assume that $G$ is not isomorphic to $\text{\rm PSL}(3,3)$.

If $G$ is isomorphic to $\text{PSL}(2,q)$ for some $q$, solvability
of $H/N$ follows from Theorem~\ref{thm6}.
So by Thompson's work, we can assume that $G$ is isomorphic to a
Suzuki group $\text{\rm Sz}(2^{p})$ for some odd prime $p$.
Then $3$ does not divide $\abs{G}$ (see \cite[XI.3.6]{HuBl:82}),
and so $\abs{H/N}$ is not divisible by $3$. Suppose that $H/N$ is
nonsolvable. The Suzuki groups are the only
finite nonabelian simple groups of order not divisible by $3$
(see \cite[XI.3.7]{HuBl:82}). So $H/N$ is isomorphic to a
Suzuki group $\text{\rm Sz}(2^{n})$ for some odd number $n$, $n\geq 3$.
We have $\abs{G}=2^{2p}(2^{2p}+1)(2^{p}-1)$ and 
$\abs{H/N}=2^{2n}(2^{2n}+1)(2^{n}-1)$. In particular, $2^{n}-1$
divides $\abs{G}(2^{p}+1)$, which equals $2^{2p}(2^{4p}-1)$. 
Thus $n$ divides $4p$ by Lemma~\ref{lem3}, and as $n$ is odd, $n=p$.
So $\abs{H/N}=\abs{G}$, a contradiction.
\end{proof}

Perhaps more could now be said about nonsolvable finite subgroups of 
units in $\ZZ[\text{\rm PSL}(2,q)]$, but we refrain from doing so.
Priority should have the following issues. Let $q$ be a power of the prime
$p$. If the order of a torsion unit in $\NU{\ZZ[\text{\rm PSL}(2,q)]}$
is divisible by $p$, has it order $p$? If $p$ is odd, are units of
order $p$ in $\ZZ[\text{\rm PSL}(2,q)]$ conjugate to elements of 
$\text{\rm PSL}(2,q)$ by units in the rational group ring?
If $p$ is odd, are finite $p$-subgroups of
$\NU{\ZZ[\text{\rm PSL}(2,q)]}$ elementary abelian?

Finally, we remark that \S\,\ref{sec:2groups} can be seen as a
contribution to the determination of the isomorphism types of finite 
$2$-subgroups in $\NU{\ZZ G}$ for finite groups $G$ with dihedral 
Sylow $2$-subgroups (a problem suggested in \cite{Her:07a}).



\providecommand{\bysame}{\leavevmode\hbox to3em{\hrulefill}\thinspace}

\end{document}